\documentclass[reqno]{amsart}

\usepackage{a4,amsmath,amssymb,amscd,verbatim,bbm,graphicx}
    \usepackage{color}
\newtheorem{theorem}{Theorem}
\newtheorem{proposition}[theorem]{Proposition}
\newtheorem{corollary}[theorem]{Corollary}
\newtheorem{lemma}[theorem]{Lemma}

\theoremstyle{definition}

\newtheorem{definition}[theorem]{Definition}

\newtheorem{remark}[theorem]{Remark}

\numberwithin{equation}{section}
\numberwithin{theorem}{section}

\title{Relative Growth in Hyperbolic Groups}

\begin{document}
\bibliographystyle{plain}

\author{Stephen Cantrell}
\address{Mathematics Institute, University of Warwick,
Coventry CV4 7AL, U.K.}
\email{S.J.Cantrell@warwick.ac.uk}

\author{Richard Sharp} 
\address{Mathematics Institute, University of Warwick,
Coventry CV4 7AL, U.K.}
\email{R.J.Sharp@warwick.ac.uk}
\maketitle

\begin{abstract}
In this note we obtain estimates on the relative growth of normal subgroups of 
non-elementary hyperbolic groups, particularly those with 
free abelian quotient.
As a corollary, we deduce 
that the associated relative growth series fail to be rational.
\end{abstract}

\section{Introduction and Results}
Let $G$ be a non-elementary hyperbolic group equipped with a finite symmetric generating set. Write $W_n = \{ g \in G: |g| =n \}$ for the collection of elements of word length $n$. By a result of Coornaert \cite{coor}, the growth rate of its cardinality $\# W_n$ is purely exponential,
i.e. there exist constants  $\lambda >1$ and $C_1,C_2 >0$ such that
$$ C_1 \lambda^n \le \#W_n \le C_2 \lambda^n$$
for all $n \ge 1$. Now suppose that $N$ is a subgroup of $G$. An interesting question to ask is how 
$\#(W_n\cap N)$, 
which we call the relative growth of $N$,
grows in comparison to $\#W_n$. A result of Gou\"ezel, Matheus and Maucourant \cite{gmm}
states that if $N$ has infinite index in $G$ then 
\begin{equation} \label{gmm}
\lim_{n \to \infty} \frac{\#(W_n \cap N)}{\#W_n} = 0.
\end{equation}
This is a subtle result that relies strongly on the hyperbolicity of $G$. If we suppose further that $N$ is normal and the quotient $G/N$ is isomorphic to $\mathbb Z^\nu$ for some $\nu \ge 1$, then we have access to more structure. With this additional information it seems reasonable to expect that we can describe the relative
growth of $N$ more precisely. 

Pollicott and Sharp \cite{ps1996} studied this problem when $G$ is 
the fundamental groups
of a compact orientable surface of genus at least two and $N$ is the commutator subgroup. 
Sharp \cite{sharp} extended this to cover hyperbolic groups $G$ that may be
realised as 
convex cocompact groups of isometries of real hyperbolic space whose fundamental domain can be chosen to be a finite sided polyhedron $R$
such that
$\bigcup_{g \in G} \partial R$ is a union of geodesic hyperplanes, with generators given by the side pairings.
The fundamental groups of compact surfaces
were shown to satisfy this condition by Bowen and Series \cite{bs}.
In addition, this class includes free groups on at least two generators and certain higher dimensional examples (see Bourdon's thesis \cite{bourdon}).
In these cases, it was shown that there exists an integer $D \ge 1$ such that, along the subsequence $Dn$, the relative growth $\#(W_{Dn} \cap N)$ grows asymptotically like $\lambda^{Dn}/(Dn)^{\nu/2}$, as $n \to \infty$. The aim of this note is to extend this result so that it applies all non-elementary hyperbolic groups. 

Before we state our main result, we recall the following standard definitions.
Given two real valued sequences $a_n$ and $b_n$, we say that $a_n \sim b_n$ if $a_n/b_n \to 1$, as $n \to\infty$. Furthermore, if $b_n$ is positive, we say that 
$a_n = O(b_n)$ if there exists a constant $C>0$ such that $|a_n| \le Cb_n$, for all $n \ge 1$.

\begin{theorem} \label{thm}
Let $G$ be a non-elementary hyperbolic group equipped with a finite symmetric generating set and let $N \triangleleft G$ be a normal subgroup with $G/N \cong \mathbb Z^\nu$ for some $\nu \ge 1$. Then
$$\# ( W_n \cap N) = O\left(\frac{\lambda^n}{n^{\nu/2}}\right)$$
as $n\to\infty$. Furthermore, there exists $D \in \mathbb{Z}_{\ge 0}$ and $C>0$ such that
$$\#(W_{Dn} \cap N) \sim \frac{C \lambda^{Dn}}{(Dn)^{\nu/2}}$$ 
as $n\to\infty$.
\end{theorem}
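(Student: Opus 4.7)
The plan is to extend the symbolic-dynamical approach of Pollicott--Sharp \cite{ps1996} and Sharp \cite{sharp} from groups with a nice geometric coding to general non-elementary hyperbolic groups by working with Cannon's geodesic automaton. First I would invoke Cannon's theorem to produce a finite directed graph $\mathcal{G}$ whose labelled paths of length $n$ emanating from a distinguished vertex are in bijection with a choice of one geodesic representative for each element of $W_n$. This encodes the counting of $W_n$ as a path-counting problem in a subshift of finite type $(\Sigma,\sigma)$. Because $G/N \cong \mathbb{Z}^\nu$, the quotient homomorphism $\pi\colon G \to \mathbb{Z}^\nu$ pulls back to a $\mathbb{Z}^\nu$-valued cocycle $\psi\colon \Sigma \to \mathbb{Z}^\nu$ depending only on the current edge, so that $\pi(g) = \sum_{k=0}^{n-1}\psi(\sigma^k x)$ whenever $x \in \Sigma$ codes $g$. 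Counting $W_n \cap N$ then amounts to counting paths of length $n$ on which $\psi$ sums to zero.

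Next I would apply Fourier inversion on $\mathbb{Z}^\nu$ to write
\[
\#(W_n\cap N) \;=\; \int_{\mathbb{T}^\nu}\sum_{g\in W_n} e^{2\pi i\langle \xi,\pi(g)\rangle}\, d\xi.
\]
The inner sum is a weighted path count equal to a matrix coefficient of $M_\xi^{\,n}$, where $M_\xi$ is the twisted transition matrix with entries $\mathbb{1}\{(v,w) \in E\}\, e^{2\pi i \langle \xi,\psi(v,w)\rangle}$, equivalently a twisted Ruelle transfer operator. Standard Perron--Frobenius perturbation theory applied to each strongly connected component of $\mathcal{G}$ whose spectral radius equals the growth rate $\lambda$ gives, for $\xi$ near $0$, an analytic simple leading eigenvalue
\[
\lambda_j(\xi) \;=\; \lambda \exp\bigl(-\tfrac{1}{2}\langle \Sigma_j\xi,\xi\rangle + O(|\xi|^3)\bigr)
\]
with $\Sigma_j$ a covariance matrix, while $|\lambda_j(\xi)| < \lambda$ uniformly on the complement of a neighbourhood of the origin in $\mathbb{T}^\nu$. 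A saddle-point/Laplace estimate on the Fourier integral, applied componentwise, then delivers the upper bound $O(\lambda^n/n^{\nu/2})$. To upgrade this to a sharp asymptotic one chooses $D$ large enough that (i) every maximal component contributes only along a single arithmetic progression of lengths and (ii) the restriction of $\pi$ to closed loops in each maximal component surjects onto a full-rank sublattice of $\mathbb{Z}^\nu$; this is precisely what forces the subsequential statement with period $D$.

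The principal obstacle is that, unlike the situation handled in \cite{sharp}, Cannon's automaton for a general hyperbolic group is neither irreducible nor aperiodic; it is a reducible SFT built out of several strongly connected components, only some of which achieve the top spectral radius $\lambda$ (their existence being guaranteed by Coornaert's pure exponential growth theorem \cite{coor}). One must identify those maximal components, run the perturbative spectral analysis on each, and then assemble the contributions through the block upper-triangular structure of the full transition matrix without losing the leading $n^{-\nu/2}$ factor. A secondary difficulty is establishing strict positive definiteness of each $\Sigma_j$, which reduces to showing that no non-trivial $\mathbb{R}$-linear combination of the coordinates of $\psi$ is cohomologous to a constant on the relevant component; this non-degeneracy, together with the lattice generated by $\psi$ on loops, is exactly what pins down the integer $D$ and the constant $C$ in the final asymptotic.
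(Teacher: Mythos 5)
Your proposal follows essentially the same route as the paper: encode $G$ via a strongly Markov (Cannon-type) automaton, express $\#(W_n\cap N)$ by Fourier inversion over $\mathbb{R}^\nu/\mathbb{Z}^\nu$, twist the block transition matrix by the character $e^{2\pi i\langle t,\cdot\rangle}$, run Perron--Frobenius perturbation theory on the maximal strongly connected components, and extract the $n^{-\nu/2}$ factor by a Laplace estimate. The overall architecture is correct and matches what the authors do. But you treat the two crucial non-degeneracy inputs as routine, whereas they are precisely the technical obstacles that the paper identifies and whose resolution is its main novel content. Your Gaussian expansion $\lambda_j(\xi)=\lambda\exp(-\tfrac12\langle\Sigma_j\xi,\xi\rangle+O(|\xi|^3))$ silently assumes that the first derivative of the leading eigenvalue at $\xi=0$ vanishes; this is equivalent to the mean of $\psi_t=\langle t,\varphi\rangle$ with respect to the counting measure on $W_n$ being zero, which the paper establishes via Cantrell's statistical results together with the symmetry $|g|=|g^{-1}|$ of the generating set. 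More seriously, you assert that strict positive definiteness of $\Sigma_j$ ``reduces to'' showing no nontrivial linear functional of $\psi$ is cohomologous to a constant on each maximal component, but offer no argument. That is exactly Lemma~\ref{notcohom} of the paper, and its proof is the key new ingredient: if $\langle t,f_j\rangle$ were cohomologous to $0$, then the exponentially many loop words in $B_j$ would all lie in $\ker\psi_t$, forcing $\limsup_n \#(W_n\cap\ker\psi_t)/\#W_n>0$, which contradicts the Gou\"ezel--Math\'eus--Maucourant zero-density theorem for infinite-index subgroups. The same lemma, via Marcus--Tuncel, is what makes $\Gamma_j/\Delta_j$ finite and hence what pins down a valid $D$; your sketch gestures at ``the lattice generated by $\psi$ on loops'' without showing it has finite index in $\mathbb{Z}^\nu$.

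A further point you need to address explicitly: the ``block upper-triangular structure'' is not by itself enough to assemble the component contributions. If a path in $\mathcal{G}$ could pass through two distinct maximal components, the path count would grow like $n\lambda^n$ and the Laplace analysis would no longer yield $n^{-\nu/2}$. The paper avoids this by invoking Calegari--Fujiwara's lemma (itself a consequence of Coornaert's purely exponential growth bound) that distinct maximal components do not communicate, and by building the matrices $C_j$ so that each isolates a single maximal component. Without this observation your assembly step has a gap.
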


This theorem has the following immediate corollary.

\begin{corollary} \label{cor1}
Let $G$ be a non-elementary hyperbolic group equipped with a finite symmetric generating set and let $N \triangleleft G$ be a normal subgroup
such that the abelianisation of $G/N$ has rank $\nu \ge 1$.
Then
$$\# ( W_n \cap N) = O\left(\frac{\lambda^n}{n^{\nu/2}}\right)$$
as $n\to\infty$.
\end{corollary}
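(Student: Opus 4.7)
The plan is to reduce Corollary \ref{cor1} to Theorem \ref{thm} by producing an intermediate normal subgroup whose quotient is precisely $\mathbb{Z}^\nu$, and then using monotonicity of the counting function under inclusion.

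Concretely, write $Q = G/N$ and let $\pi \colon G \to Q$ denote the quotient homomorphism. By assumption, the abelianisation $Q^{ab} = Q/[Q,Q]$ has rank $\nu \ge 1$, so there is a canonical surjection $Q^{ab} \to Q^{ab}/\mathrm{Tors}(Q^{ab}) \cong \mathbb{Z}^\nu$. Composing with $\pi$ and with the abelianisation map $Q \to Q^{ab}$ gives a surjective homomorphism $\varphi \colon G \to \mathbb{Z}^\nu$. Set $N' = \ker \varphi$. Then $N'$ is a normal subgroup of $G$ with $G/N' \cong \mathbb{Z}^\nu$, and since $\varphi$ factors through $\pi$ we have $N \subseteq N'$.

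It follows immediately that $W_n \cap N \subseteq W_n \cap N'$, and therefore
\[
\#(W_n \cap N) \le \#(W_n \cap N')
\]
for every $n$. Applying the upper bound part of Theorem \ref{thm} to the normal subgroup $N'$ yields $\#(W_n \cap N') = O(\lambda^n / n^{\nu/2})$, and the corollary follows.

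There is no serious obstacle here: all the analytic content is already packaged into Theorem \ref{thm}, and the only work is the group-theoretic step of extracting a free abelian quotient of rank $\nu$ from the hypothesis on the abelianisation of $G/N$. Note in particular that we do not attempt to prove an asymptotic equivalence for $\#(W_n \cap N)$ itself; only the upper bound survives this reduction, because in general $N$ may be strictly smaller than $N'$ and the precise asymptotics for the two subgroups need not agree.
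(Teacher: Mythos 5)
Your proposal is correct and is essentially the argument the paper gives: both constructions produce a surjection $G \to \mathbb{Z}^\nu$ factoring through $G/N$ (the paper via writing $(G/N)^{\mathrm{ab}} \cong \mathbb{Z}^\nu \times F$ with $F$ finite, you via quotienting out the torsion subgroup of $(G/N)^{\mathrm{ab}}$), take its kernel $N_0 \supseteq N$, apply Theorem \ref{thm} to $N_0$, and conclude by monotonicity of $\#(W_n \cap \cdot)$ under inclusion.
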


\begin{proof}
Write the abelianisation of $G/N$ as $\mathbb Z^\nu \times F$, where $F$ is finite. 
There are then natural surjective homomorphisms $\phi : G \to G/N$ and $\psi : G/N \to \mathbb Z^\nu$.
Set $\phi_0 = \psi \circ \phi$ and $N_0 = \ker \phi_0$. 
Then $N \subset N_0$. Furthermore, by Theorem \ref{thm},
$\#(W_n \cap N_0) = O(\lambda^n n^{-\nu/2})$, giving the required estimate.
\end{proof}

\begin{remark}
The relative growth in Corollary \ref{cor1} may occur at a slower exponential rate.
Indeed, Coulon, Dal'Bo and Sambusetti recently showed that $\#(W_n \cap N) = O(\lambda_0^n)$, for some $0< \lambda_0 <\lambda$ precisely when
$G/N$ is {\it not} amenable \cite{cds}. In fact, their result does not require normality of the subgroup, in 
which case amenability is replaced by co-amenability of $N$ in $G$, i.e. that the $G$-action on the coset space
$G/N$ is amenable.
\end{remark}
To prove Theorem \ref{thm}, we would like to employ the strategy used by the second author in \cite{sharp}. However, there are significant technical obstacles which we need to overcome in order to use this method. We summarise these below.\\ 
(i) Firstly, as mentioned above, in \cite{sharp} there are strong restrictions on the hyperbolic groups and their generating sets. This makes it much easier to study the relative growth quantity $\#(W_n\cap N)$. In the current paper we need to find a new approach that works for general non-elementary hyperbolic groups, that will allow us to express $\#(W_n\cap N)$ in terms of quantities which we can analyse. To achieve this we appeal to ideas and techniques used in \cite{stats}.\\ 
(ii) Secondly, we need a good understanding of how real valued group homomorphisms on hyperbolic groups grow as we increase the word length of the input. Again, recent work of the first author \cite{stats} allows us to deduce the required properties of these homomorphisms.\\

We end this section with a discussion of relative growth series. We define the relative growth series for $N$ in
$G$ (with respect to the given generators) to be the power series
\[
 \sum_{n=0}^\infty \#(W_n \cap N) z^n.
\]
When $N =G$, this is the standard growth series and, for hyperbolic groups, is well-known to be 
the series of a rational function \cite{can}, \cite{gh}.
The requirement that a power series be rational imposes a strong constraint on the coefficients:
if $\sum_{n=0}^\infty a_n z^n$ is rational then
there are complex numbers $\xi_1,\ldots,\xi_m$ and polynomials $P_1,\ldots,P_m$ such that
\[
a_n = \sum_{j=1}^m P_j(n) \xi_j^n
\]
(Theorem IV.9 of \cite{fs}). Comparing with the asymptotic in Theorem \ref{thm}, we see that 
$\#(W_n \cap N)$ does not satisfy this constraint. Thus we obtain the following.

\begin{corollary} \label{cor2}
Suppose $G$ is a non-elementary hyperbolic group equipped with a finite symmetric generating set. Let $N \triangleleft G$ be a normal subgroup with $G/N \cong \mathbb{Z}^\nu$, 
for some $\nu \ge 1$. Then, the relative
growth series
$$\sum_{n=1}^\infty \#(W_n \cap N) z^n$$
is not the series of a rational function.
\end{corollary}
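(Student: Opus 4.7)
The plan is to argue by contradiction, exploiting the tension between the rigid form forced on coefficients of a rational power series and the precise asymptotic of Theorem \ref{thm}. Suppose the series is rational. By the structure theorem (Theorem IV.9 of \cite{fs}), there exist distinct non-zero $\xi_1,\ldots,\xi_m \in \mathbb{C}$ and non-zero polynomials $P_1,\ldots,P_m$ such that
$$a_n := \#(W_n \cap N) = \sum_{j=1}^m P_j(n)\,\xi_j^n$$
for all sufficiently large $n$. I would show that no such representation can coexist with Theorem \ref{thm}.

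Normalise by setting $\eta_j := \xi_j/\lambda$. The upper bound $a_n = O(\lambda^n n^{-\nu/2})$ of Theorem \ref{thm} immediately rules out any $|\eta_j|>1$, so each $|\eta_j| \le 1$. Separating the $|\eta_j| < 1$ terms (which decay at an exponentially small rate) from the $|\eta_j|=1$ terms yields
$$\frac{a_n}{\lambda^n} = M_n + O(r^n), \qquad M_n := \sum_{|\eta_j|=1} P_j(n)\,e^{in\theta_j},$$
for some $r \in (0,1)$, where we write $\eta_j = e^{i\theta_j}$.

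The crux is to show $M_n \equiv 0$. Since $a_n/\lambda^n = O(n^{-\nu/2}) \to 0$, certainly $M_n \to 0$. Let $d$ be the maximal degree of the $P_j$ appearing in $M_n$ and let $c_j$ denote their leading coefficients; set $S_n := \sum_{\deg P_j = d} c_j e^{in\theta_j}$. Then $M_n/n^d = S_n + o(1)$, while $n^d \ge 1$ and $M_n \to 0$ force $M_n/n^d \to 0$, so $S_n \to 0$. Applying a Cesaro argument, the averages $K^{-1}\sum_{n=1}^K e^{-in\theta_\ell} S_n$ simultaneously tend to $0$ and, by orthogonality of distinct characters, to $c_\ell$; hence $c_\ell = 0$ for every $\ell$. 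This contradicts the maximality of $d$, so in fact no $|\eta_j|=1$ term appears, i.e.\ $M_n \equiv 0$ and $a_n/\lambda^n = O(r^n)$ decays exponentially.

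This conclusion is incompatible with the second part of Theorem \ref{thm}: the asymptotic $a_{Dn} \sim C\lambda^{Dn}/(Dn)^{\nu/2}$ with $C>0$ says that $a_{Dn}/\lambda^{Dn}$ decays only at the polynomial rate $(Dn)^{-\nu/2}$, not exponentially. The sole non-trivial step is the Cesaro/character-orthogonality argument, together with the mild bookkeeping needed to strip off the highest polynomial factor before invoking it; everything else is a direct comparison of the two incompatible asymptotic shapes.
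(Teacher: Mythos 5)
Your proof is correct and takes essentially the same approach as the paper: both invoke the Flajolet--Sedgewick structure theorem for rational power series coefficients and then derive a contradiction with Theorem \ref{thm}, the only difference being that the paper leaves the comparison as a one-line remark while you carry it out in detail (using the $O$-bound to force any surviving dominant exponential polynomial to vanish via a Ces\`aro/orthogonality argument, and the subsequence asymptotic with $C>0$ to contradict the resulting exponential decay). The only slight imprecision is the claim that the upper bound ``immediately'' rules out $|\eta_j|>1$; strictly this step hides the same Ces\`aro argument you make explicit for the unimodular case, though it is a standard fact about exponential polynomials.
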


\begin{remark} (i) The first result of this type is due to Grigorchuk, who showed that the relative growth series 
is not rational when $G$ is the free group on two generators and $N$ is the commutator subgroup
(see \cite{gri-harpe}). A similar result was obtained for the fundamental groups of compact surfaces of genus
$\ge 2$ in \cite{ps1996} and this was extended to a wider class of hyperbolic groups in \cite{sharp}.

\noindent
(ii)
We note that, as Corollary \ref{cor2} requires the asymptototic along a subsequence in Theorem \ref{thm},
it does not apply to general infinite index subgroups of hyperbolic groups.
In fact, Grigorchuk showed that  if $N$ is a finite index subgroup of a free group than its relative growth series 
is rational \cite{gri}.
\end{remark}

\section{Preliminaries}
We first recall the definition of a hyperbolic group.
A metric space is hyperbolic if there exist $\delta \ge 0$ for which every geodesic triangle is $\delta$-thin, i.e. given any geodesic triangle, the union of the $\delta$ neighbourhoods of any two sides of this triangle contain the third side. A finitely generated group $G$ is said to be hyperbolic, if given any finite generating set $S$, the Cayley graph of $G$ with respect to $S$ is a hyperbolic metric space when equipped with the word metric.
We say that a hyperbolic group is elementary if it contains a cyclic subgroup of finite index. We will be 
exclusively concerned with non-elementary hyperbolic groups.
 \\ \indent
Hyperbolic groups have nice combinatorial properties that arise due to their strongly Markov structure.

\begin{definition} \label{lab}
A finitely generated group $G$ is strongly Markov if given any  generating set $S$ there exists a finite directed graph $\mathcal{G}$ with vertex set $V$, edge set $E$ 
(with at most one directed edge between an ordered pair of vertices) and a labeling map $\rho : E \to S$ such that:
\begin{enumerate}
\item there exists an initial vertex $\ast \in V$ such that no directed edge ends at $\ast$;
\item the map taking finite paths in $\mathcal{G}$ starting at $\ast$ to $G$ that sends a path with concurrent edges $(\ast,x_1), \ldots ,(x_{n-1},x_n)$ to $\rho(\ast,x_1)\rho(x_1,x_2) \cdots \rho(x_{n-1},x_n)$, is a bijection; 
\item the word length of $\rho(\ast,x_1) \cdots \rho(x_{n-1},x_n)$ is $n$.
\end{enumerate}
\end{definition}

In \cite{gh} Ghys and de le Harpe extended Cannon's work on Kleinian groups \cite{can} and proved that hyperbolic groups are strongly Markov.

\begin{proposition} [\cite{gh}, Chapitre 9, Th\'eor\`eme 13]
Any hyperbolic group is strongly Markov.
\end{proposition}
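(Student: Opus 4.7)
The plan is to reconstruct Cannon's cone type argument, which is the standard route to this result. Given $g \in G$, define the \emph{cone type} of $g$ to be $C(g) = \{h \in G : |gh| = |g| + |h|\}$, so that $C(g)$ records which group elements can be appended to $g$ without cancellation in the word metric. The first and crucial step is to show that in a $\delta$-hyperbolic group there are only finitely many distinct cone types. The geometric input is that, thanks to the thin triangle condition, whether a word starting at $g$ remains globally geodesic is determined by its behaviour in a bounded neighbourhood of $g$: if there is a label-preserving isomorphism between the balls $B(g,K) \to B(g',K)$ (for a suitable constant $K = K(\delta)$) that respects distances to the identity, then $C(g) = C(g')$. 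Since the generating set $S$ is finite, only finitely many such local patterns occur, hence only finitely many cone types exist.

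With this in hand I would build the graph $\mathcal{G}$, but some care is needed: the naive graph whose vertices are cone types gives a bijection between admissible paths and geodesic \emph{words}, whereas condition~(2) of Definition~\ref{lab} demands a bijection with $G$ itself, and an element may have many geodesic representatives. To resolve this I would fix a total order on $S$ and, for each $g \in G$, single out its shortlex-least geodesic representative. The cone type is then refined by adding, to $C(g)$, just enough local data (supported in $B(g,K)$) to decide, for each $s \in S$ with $|gs| = |g|+1$, whether the shortlex-least geodesic for $gs$ is obtained by appending $s$ to the shortlex-least geodesic for $g$. A small strengthening of the ball-determination argument shows that this refined cone type still depends only on bounded local data, so finitely many refined cone types remain. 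The vertex set $V$ of $\mathcal{G}$ is taken to be these refined cone types together with a distinguished initial vertex $\ast$ representing the identity, and a directed edge labelled $s$ is drawn from the refined cone type of $g$ to that of $gs$ precisely when appending $s$ produces the shortlex-least geodesic for $gs$.

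Conditions (1)--(3) of Definition~\ref{lab} are now essentially bookkeeping. No edge ends at $\ast$, since $\ast$ corresponds to the empty word and any extension has positive length. The length of a path from $\ast$ equals the word length of the product of its edge labels, since each edge appends one letter along a geodesic extension. Bijectivity with $G$ holds because every element has a unique shortlex-least geodesic, which yields a unique path, and conversely every path in $\mathcal{G}$ spells such a geodesic. The main obstacle, and the only place where $\delta$-hyperbolicity enters in an essential way, is the finiteness of (refined) cone types; the rest of the construction is a combinatorial packaging. A candidate value of $K$ can be extracted from the standard slim-triangle estimates, and the verification that this $K$ suffices is the core calculation one would need to carry out in detail.
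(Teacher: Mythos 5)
Your strategy---Cannon cone types together with a shortlex selection of geodesic representatives---is indeed the standard route, and is essentially what Ghys--de la Harpe do (building on Cannon). The finiteness of cone types via the ball-determination lemma and the bookkeeping of conditions (1)--(3) are all fine. The gap is in the middle step, and it is a genuine one rather than a detail to be checked: the claim that the ``refined cone type'' needed to decide whether the shortlex geodesic for $gs$ extends that of $g$ is ``supported in $B(g,K)$'' and follows from ``a small strengthening of the ball-determination argument'' is not correct. Deciding whether appending $s$ to the shortlex word $w$ for $g$ yields the shortlex word for $gs$ amounts to ruling out a geodesic word $w'$ for $gs$ with $w'$ lexicographically smaller than $ws$; such a competitor $w'$ first diverges from $w$ at some earlier position, and its length-$|g|$ prefix is a geodesic word, lex-smaller than $w$, for some nearby element $gh$ with $|gh|=|g|$. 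Whether such a lex-smaller geodesic word exists for $gh$ is not a function of the metric ball around $g$: two elements $g_1,g_2$ with isometric labelled balls $B(g_i,K)$ can have shortlex words that compare differently against the geodesic words reaching their respective neighbours, because the comparison depends on the entire word history, not on local geometry. So Cannon's ball argument does not ``strengthen'' to cover this.

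The correct fix is the word-difference device from the theory of automatic groups. Augment each cone type by the competitor set
$D(g)=\{\,h\in B(e,K): |gh|=|g| \text{ and some geodesic word for } gh \text{ of length } |g| \text{ is lex-smaller than the current word}\,\}.$
This set lies in a fixed finite ball, so there are only finitely many augmented states. One then uses the $2\delta$-fellow-travelling of geodesics with endpoints at bounded distance to show that $D(gs)$ is computable from $D(g)$, the cone/ball type of $g$, and the letter $s$; this is where hyperbolicity enters a second time, independently of the finiteness of cone types. The automaton whose states are pairs (cone type, competitor set), with transitions only along letters $s$ for which $D(gs)$ certifies shortlex-minimality of the extension, then yields the bijection with $G$ required by condition (2). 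Without this history-tracking state, a purely ball-based refinement does not determine shortlex continuation, and the construction as written would fail to be well-defined.
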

Suppose that $\mathcal{G} = (E,V)$ is a directed graph associated to $G$ satisfying the properties in Definition \ref{lab}. We define a transition matrix $A$, indexed by $V \times V$,  by
\[
  A(v_1,v_2) = \left\{
     \begin{array}{@{}l@{\thinspace}l}
        1 & \ \  \text {if} \hspace{2mm}  (v_1,v_2) \in E \\
        0 & \ \  \text{otherwise.}
     \end{array}
   \right.
\]
Using $A$ we define a space
$$
\Sigma_A = \{ (x_n)_{n=0}^\infty : x_n \in V \text{ and } A(x_n,x_{n+1}) = 1
 \text{ for all } n\in\mathbb{Z}_{\ge 0}\}
$$
and $\sigma:\Sigma_A \to \Sigma_A$ by $\sigma((x_n)_{n=0}^\infty) = (x_{n+1})_{n=0}^\infty$. The system $(\Sigma_A,\sigma)$ is known as a subshift of finite type. 

Recall that a matrix $M$ with zero-one entries is called irreducible if for each $i,j$ there exists $n(i,j)$ for which $M^{n(i,j)}(i,j) >0$. 
This is equivalent to the directed graph $\mathcal G$ being connected.
We call $M$ aperiodic if there exists $n$ such that every entry of $M^n$ is strictly positive. Due to the $\ast$ vertex, which forms its own connected component in $\mathcal G$,
$A$ is never irreducible. However, it is possible that, after removing from $A$ the row and column corresponding to the $\ast$ state, the resulting matrix is aperiodic. In fact, for the hyperbolic groups and generating sets considered by Sharp in \cite{sharp}, it is always possible to find  a corresponding directed graph described by an aperiodic matrix (after removing $\ast$). This is not true in general and to improve upon the results in \cite{sharp}, we need to exploit geometrical and combinatorial properties of hyperbolic groups to obtain additional structural information about the directed graph $\mathcal{G}$. Throughout the rest of this section we introduce the preliminaries that will allow us to analyse $\#(W_n\cap N)$ for general hyperbolic groups. \\ \indent

As mentioned above, in general, the graph $\mathcal G$ may have several connected components. By relabeling the vertex set $V$, we may assume that $A$ has the form
$$A = \begin{pmatrix} 
A_{1,1} & 0 & \dots & 0  \\
A_{2,1} & A_{2,2} & \dots & 0\\
\vdots & \vdots & \ddots & \vdots\\
A_{m,1} & A_{m,2} & \dots & A_{m,m}
\end{pmatrix},$$
where each $A_{j,j}$ is irreducible for $j=1,...,m$. We call the $A_{j,j}$ the irreducible components of $A$. 

Let $\lambda>1$ denote the exponential growth rate of $W_n$. It is easy to see by Property $(2)$ and $(3)$ in Definition \ref{lab}, that all of the $A_{j,j}$ must have spectral radius at most $\lambda$. Furthermore there must be at least one $A_{j,j}$ with spectral radius exactly $\lambda$. We call an irreducible component maximal if it has spectral radius $\lambda$. We label the maximal components $B_j$ for $j=1,\ldots ,m$ and define $\Sigma_{B_j}$, $j=1,\ldots ,m$ analogously to $\Sigma_A$. For each $\Sigma_{B_j}$ there exists $p_j \ge 1$ such that $\Sigma_{B_j}$ admits a cyclic decomposition into $p_j$ disjoint sets, 
$$
\Sigma_{B_j} = \bigsqcup_{k=0}^{p_j-1} \Sigma_k^j.
$$
We call $p_j$ the cyclic period of $\Sigma_{B_j}$. The shift map $\sigma$ sends $\Sigma_k^j$ into $\Sigma_{k+1}^j$ where $k,k+1$ are taken modulo $p_j$. Hence each $\Sigma_k^j$ is $\sigma^{p_j}$-invariant. In fact, each system $\sigma^{p_j} : \Sigma_k^j \to \Sigma_k^j$ is a 
subshift of finite type with aperiodic transition matrix. 

The following key result, that relies on Coornaert's estimates for $\#W_n$, shows that the maximal components $B_j$ do not interact with each other. This result allows us to gain a better understanding of the structure of $\mathcal{G}$.

\begin{proposition} [\cite{cf}, Lemma $4.10$] \label{cf}
The maximal components of $A$ are disjoint. There does not exist a path in $\mathcal{G}$ that begins in one maximal component and ends in another.
\end{proposition}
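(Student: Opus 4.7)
The plan is to argue by contradiction, using only Coornaert's upper bound $\#W_n \le C_2 \lambda^n$ and standard Perron--Frobenius asymptotics for irreducible nonnegative matrices. The first sentence of the proposition is automatic: the irreducible components of $A$ are precisely the nontrivial strongly connected components of $\mathcal G$, hence vertex-disjoint by construction, so all the content lies in the second sentence.

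Suppose, for contradiction, that there exist distinct maximal components $B_1$ and $B_2$ together with a directed path $\gamma$ in $\mathcal G$ of length $L$ from some $u \in B_1$ to some $v \in B_2$. Property (2) of Definition \ref{lab} guarantees a path from $\ast$ to $u$ in $\mathcal G$, say of length $\ell_0$. I would then manufacture many distinct paths of length $n$ starting at $\ast$ by concatenating, in order: the chosen $\ast$-to-$u$ path of length $\ell_0$; a loop at $u$ inside $B_1$ of length $k$; the connector $\gamma$; and a path inside $B_2$ starting at $v$ of length $m$, where $\ell_0 + k + L + m = n$.

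Perron--Frobenius applied to the irreducible matrix $B_1$ (spectral radius $\lambda$, period $p_1$) yields a constant $c_1 > 0$ such that for every sufficiently large $k$ with $p_1 \mid k$ the number of $u$-loops in $B_1$ of length $k$ is at least $c_1 \lambda^k$. A similar application to $B_2$ yields a constant $c_2 > 0$ such that for every sufficiently large $m$ the number of paths of length $m$ in $B_2$ starting at $v$ is at least $c_2 \lambda^m$. Summing over the $\Theta(n)$ admissible pairs $(k,m)$ with $k + m = n - \ell_0 - L$ and $p_1 \mid k$, and invoking the bijection from Definition \ref{lab} between paths from $\ast$ and elements of $G$, I obtain $\#W_n \ge c\, n\, \lambda^n$ for some $c > 0$ and all sufficiently large $n$. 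This contradicts Coornaert's upper bound, completing the argument.

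The main technical point to watch is the potential periodicity of $B_1$ and $B_2$: loop counts at $u$ in $B_1$ vanish for $k$ not divisible by $p_1$, and the $B_2$ lower-bound constant may a priori depend on the residue of $m$ modulo $p_2$. Neither issue is fatal, because the number of admissible $k \in [0, n]$ is still linear in $n$, and the minimum over residue classes of the constant for $B_2$ is still strictly positive. Beyond these bookkeeping details, the proof needs nothing more than the bijective correspondence between paths in $\mathcal G$ starting at $\ast$ and elements of $G$.
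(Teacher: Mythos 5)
Your argument is correct and follows essentially the same approach as the paper's own sketch: both count paths that travel within one maximal component, cross a fixed connecting path, then travel within the other, and observe that the $\Theta(n)$ choices of split point each contribute $\gtrsim \lambda^n$, giving $\gtrsim n\lambda^n$ elements and contradicting Coornaert's upper bound. You are somewhat more careful than the paper's sketch (explicitly prepending the $\ast$-to-$u$ path so that the bijection of Definition \ref{lab} applies, and addressing periodicity), but these are refinements of the same argument rather than a different route.
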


\begin{proof}
For the convenience of the reader, we include a sketch of the proof. Suppose there is a path of length $l$ between maximal components that starts at a vertex $x$ in $B_j$ and end at vertex $y$ in $B_k$. Then for large $n$, the number of  length $n$ paths that begin in $B_j$, traverse $m<n-l$ edges in $B_j$ to $x$, then follow our path to $y$ in $B_k$ and traverse $n-m-l $ edges in $B_k$, is growing like $\lambda^n$. Since we can vary $m$ between $1$ and $n-l$, this implies there are at least $C n \lambda^n$ paths from $B_j$ to $B_k$ for some $C>0$. This would imply $\#W_n$ grows at least like $ n\lambda^n$, contradicting Coornaert's estimates for $\#W_n$ \cite{coor}.
\end{proof}

This fact will be useful when counting certain quantities related to relative growth. To further facilitate these counting arguments, we define the following matrices.

\begin{definition}
For each $j=1,...,m$, define a matrix $C_j$ by,
\[
  C_j(u,v) = \left\{
     \begin{array}{@{}l@{\thinspace}l}
          0& \ \  \text{if $u$ or $v$ belong to a maximal component that is not $B_j$,}\\
          A(u,v) & \ \ \text{otherwise}.
     \end{array}
   \right.
\]
\end{definition}

Now suppose that $N \triangleleft G$ is a normal subgroup for which $G/N \cong \mathbb{Z}^\nu$ and let $\varphi: G \to G/N \cong \mathbb{Z}^\nu$ be the quotient homomorphism. We define a function $f: \Sigma_A \to \mathbb{Z}^\nu$ by 
$$f( (x_n)_{n=0}^\infty) = \varphi(\rho(x_0,x_1)),$$
where $\rho$ is the labeling map from Definition \ref{lab}. 
Since $f((x_n)_{n=0}^\infty)$ depends only on the first two coordinates of $(x_n)_{n=0}^\infty$, 
we can consider $f$ as a map from the directed edge set of $\mathcal{G}$ to $\mathbb{R}$. 
We then have that $\varphi(g) = f(\ast,x_1)+ f(x_1,x_2) + \cdots + f(x_{|g|-1},x_{|g|})$ where 
$(\ast,x_1),...,(x_{|g|-1},x_{|g|})$ is the unique path associated to $g$ by Property $(2)$ of Definition $2.1$. 
Using $f$, we weight the matrices $C_j$ componentwise and define, for $t \in \mathbb{R}^\nu$,
$$
C_j(t)(u,v) = e^{2\pi i \langle t, f(u,v) \rangle} C_j(u,v).
$$ 
We define the matrices $B_j(t)$ analogously.

\section{Proof of Theorem \ref{thm}}
Suppose $G$ is a non-elementary hyperbolic group and $N$ a normal subgroup satisfying the hypothesis of 
Theorem \ref{thm}. Let $\varphi: G \to \mathbb{Z}^\nu$ denote the quotient homomorphism. To study the relative growth of $N$, we would like to express 
$\#(W_n \cap N)$ in terms of the matrices $C_j(t)$. Using the orthogonality 
identity 
\[
\int_{\mathbb{R}^\nu/\mathbb{Z}^\nu} e^{2\pi i \langle t, \varphi(g) \rangle} \ dt = 
\begin{cases}
1 &\text{ if $\varphi(g)=0$}\\
0 &\text{ otherwise} 
\end{cases}
\]
we can write
\begin{align*}
\#(W_n \cap N) = \sum_{|g|=n} \int_{\mathbb{R}^\nu/\mathbb{Z}^\nu} 
e^{2\pi i \langle t, \varphi(g) \rangle} \ dt = \int_{\mathbb{R}^\nu/\mathbb{Z}^\nu} 
\sum_{|g|=n} e^{2\pi i \langle t, \varphi(g)\rangle} \ dt.
\end{align*}

The following result will allow us to rewrite $\#(W_n \cap N) $ in terms of the matrices $C_j$.
Let $v_\ast$ be the vector in $\mathbb{R}^V$ with a one in the coordinate corresponding to the 
$\ast$ vertex and zeros elsewhere. Also, let $\textbf{1} \in \mathbb{R}^\nu$ be the vector with a $1$ in each coordinate.
\begin{lemma}
There exists $\epsilon >0$ such that for all $t \in \mathbb{R}^\nu/\mathbb{Z}^\nu$
$$
\sum_{|g|=n} e^{2\pi i \langle t, \varphi(g) \rangle } 
= \sum_{j=1}^m  \langle C_j^n(t) v_\ast, \textup{\textbf{1}} \rangle + O((\lambda-\epsilon)^n)$$
as $n\to\infty$. The implied constant is independent of $t$.
\end{lemma}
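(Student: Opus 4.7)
The plan is to first translate the exponential sum into a matrix product using the Markov coding, then use Proposition~\ref{cf} to reduce to the contributions of the maximal components via inclusion-exclusion.

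First, by properties (2) and (3) of Definition \ref{lab}, elements $g \in G$ with $|g| = n$ are in bijection with length-$n$ paths in $\mathcal{G}$ starting at $\ast$, and $\varphi(g)$ is precisely the sum of the values of $f$ along the corresponding edges. If we let $A(t)$ denote the matrix with entries $A(t)(u,v) = e^{2\pi i \langle t, f(u,v)\rangle} A(u,v)$, then standard manipulation gives
\[
\sum_{|g|=n} e^{2\pi i \langle t, \varphi(g) \rangle} = \langle A(t)^n v_\ast, \mathbf{1}\rangle.
\]
Now let $D$ be obtained from $A$ by zeroing out every entry involving a vertex in any maximal component, and let $D(t)$ be the analogous weighted version. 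Writing $R_n(t) = \langle D(t)^n v_\ast, \mathbf{1}\rangle$ and $Q_{j,n}(t) = \langle C_j(t)^n v_\ast, \mathbf{1}\rangle$, I want to establish the identity
\[
\langle A(t)^n v_\ast, \mathbf{1}\rangle = \sum_{j=1}^m Q_{j,n}(t) - (m-1) R_n(t),
\]
and then absorb $R_n(t)$ into the error term.

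The identity is a counting argument at the level of paths, which carries over verbatim to the weighted sums because the weight $e^{2\pi i \langle t, \sum f\rangle}$ factors through each individual path. By Proposition \ref{cf}, a path from $\ast$ of length $n$ visits at most one maximal component. Thus the set of such paths partitions as $\mathcal{P}_0 \sqcup \bigsqcup_{j=1}^m \mathcal{P}_j$, where $\mathcal{P}_0$ is the set of paths that avoid every maximal component and $\mathcal{P}_j$ is the set of paths that visit $B_j$. Under the definition of $C_j$, the weighted sum $Q_{j,n}(t)$ counts exactly $\mathcal{P}_0 \cup \mathcal{P}_j$, while $R_n(t)$ counts only $\mathcal{P}_0$. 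Summing $Q_{j,n}(t)$ over $j$ counts $\mathcal{P}_0$ exactly $m$ times and each $\mathcal{P}_j$ exactly once, which yields the displayed identity.

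It remains to show $R_n(t) = O((\lambda-\epsilon)^n)$ uniformly in $t$. The key point is that $|D(t)(u,v)| \le D(u,v)$ entrywise, and this inequality is preserved under matrix multiplication, so $|R_n(t)| \le R_n(0)$. The unweighted $R_n(0)$ counts paths from $\ast$ lying entirely in submaximal components. Since $D$ has a block lower-triangular structure whose diagonal blocks are the submaximal irreducible components, each of spectral radius strictly less than $\lambda$, the Perron--Frobenius theory for irreducible matrices together with a standard estimate for finitely many nilpotent block interactions gives $R_n(0) = O(n^{m-1} \mu^n)$ for some $\mu < \lambda$; any $\epsilon < \lambda - \mu$ suffices. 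The main subtlety I expect is just verifying the \emph{uniformity} in $t$ of this error bound, but this follows immediately from the componentwise domination $|D(t)^n| \le D^n$, which requires no spectral information about the oscillatory matrix $D(t)$ itself.
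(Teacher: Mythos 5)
Your proof is correct and follows essentially the same approach as the paper: you establish the identity $\langle A(t)^n v_\ast, \mathbf{1}\rangle = \sum_{j=1}^m \langle C_j(t)^n v_\ast, \mathbf{1}\rangle - (m-1)R_n(t)$ via the partition of paths guaranteed by Proposition~\ref{cf}, and then bound $R_n(t)$ uniformly using entrywise domination and the fact that the non-maximal part of $A$ has spectral radius strictly below $\lambda$. You have simply spelled out the inclusion--exclusion count and the spectral estimate that the paper's terse proof takes for granted.
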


\begin{proof}
Using the correspondence between $G$ and $\Sigma_A$, we can write
$$\left|\sum_{|g|=n} e^{2\pi i \langle t, \varphi(g) \rangle }  -\sum_{j=1}^m \langle C_j^n(t)v_\ast, \textup{\textbf{1}} \rangle \right| = (m-1) \left|\sum_{g \in M_n} e^{2 \pi i \langle t,\varphi(g) \rangle}\right| \le (m-1) \ \#M_n,$$
where $M_n$ consists of the elements in $G$ of word length $n$ whose corresponding path in $\mathcal{G}$ does not enter a maximal component. It is clear that $\#M_n = O((\lambda - \epsilon)^n)$ for some $\epsilon >0$ and so the result follows.
\end{proof}

Using this lemma, we see that
$$\#(W_n \cap N)  =\sum_{j=1}^m \int_{\mathbb{R}^\nu/\mathbb{Z}^\nu}   \langle C_j^n(t)v_\ast, \textup{\textbf{1}} \rangle \ dt 
+ O((\lambda-\epsilon)^n).$$
Hence to study the relative growth of $N$ would like to understand the spectral behaviour of the $C_j(t)$ for $t \in \mathbb{R}^\nu/\mathbb{Z}^\nu$. From their definitions, it is clear that the matrices $C_j$ each have $p_j$ simple maximal eigenvalues of modulus $\lambda$ and the rest of the spectrum is contained in a disk of radius strictly smaller than $\lambda - \epsilon$, for some $\epsilon >0$. We shall be interested in the values of $t$ for which the operators $C_j(t)$ have spectral radius $\lambda$. These values of $t$ are characterised by the following lemma.

\begin{lemma}
For any $t\in \mathbb{R}^\nu$, the operator $C_j(t)$ has spectral radius at most $\lambda$. Furthermore, $C_j(t)$ has spectral radius exactly $\lambda$ if and only if it has $p_i$ simple maximal eigenvalues of the form $e^{2\pi i \theta} e^{2\pi i k/p_i} \lambda$ for $k=0, \ldots ,p_i-1$ and some $\theta \in \mathbb{R}$. This occurs if and only if $B_j(t) = e^{2\pi i \theta} M B_j M^{-1}$ where $M$ is a diagonal matrix with modulus one diagonal entries. Furthermore, when $C_j(t)$ has $p_i$ simple maximal eigenvalues of modulus $\lambda$, the rest of the spectrum is contained in a disk of radius strictly less than $\lambda$.
\end{lemma}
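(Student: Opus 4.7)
The plan is to reduce the statement to a classical result about non-negative periodic irreducible matrices applied to $B_j(t)$, after first showing that the spectrum of $C_j(t)$ near the circle $\{|z|=\lambda\}$ is controlled entirely by $B_j(t)$. For the upper bound $\rho(C_j(t)) \le \lambda$, the starting point is the componentwise bound $|C_j(t)(u,v)| = C_j(u,v)$. Iterating gives $|C_j(t)^n(u,v)| \le C_j^n(u,v)$ for every $n$, so by Gelfand's formula $\rho(C_j(t)) \le \rho(C_j) = \lambda$.

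Next I would exploit the block structure of $C_j$. After relabelling the vertex set according to the strongly connected components of the directed subgraph underlying $C_j$, both $C_j$ and $C_j(t)$ become block triangular with diagonal blocks indexed by those components. By Proposition \ref{cf} and the definition of $C_j$, the unique diagonal block of spectral radius $\lambda$ is $B_j$ itself, while every other diagonal block has spectral radius at most $\lambda - \eta$ for some $\eta > 0$. Applying the dominance argument block by block shows that the corresponding twisted non-$B_j$ blocks also have spectral radius at most $\lambda - \eta$, and since the eigenvalues of a block triangular matrix are the union of those of the diagonal blocks, every eigenvalue of $C_j(t)$ of modulus exceeding $\lambda - \eta$ arises as an eigenvalue of $B_j(t)$ and conversely, with the same algebraic multiplicity.

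This reduces the problem to analysing $B_j(t)$, a complex matrix dominated componentwise by the non-negative irreducible matrix $B_j$ of period $p_j$. Here I would invoke the periodic Wielandt-type dichotomy: if $|A_{uv}| \le B_{uv}$ with $B$ irreducible and $A$ has an eigenvalue of modulus $\rho(B) = \lambda$, then $A = e^{2\pi i \theta} M B M^{-1}$ for some $\theta \in \mathbb{R}$ and some diagonal matrix $M$ with $|M_{uu}| = 1$; conversely, any such $A$ has spectrum equal to that of $B$ multiplied by $e^{2\pi i \theta}$. Combining this with Perron--Frobenius for periodic irreducible matrices, the spectrum of $B_j(t)$ on the circle $\{|z|=\lambda\}$ consists precisely of the $p_j$ simple eigenvalues $e^{2\pi i \theta} e^{2\pi i k/p_j} \lambda$ for $k = 0, \ldots, p_j - 1$, with the remainder of the spectrum of $B_j$ (and hence of $B_j(t)$) strictly inside the disk of radius $\lambda$. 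Together with the $\lambda - \eta$ bound on the other diagonal blocks, this also yields the claimed gap in the spectrum of $C_j(t)$.

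The main obstacle is the precise invocation of the periodic Wielandt statement, which requires tracking not merely the dominant eigenvalue but the entire cyclic family on the spectral circle; one has to argue that the rotational equality of the eigenvalues forces the phases of the entries of $A$ to align along every cycle of the graph of $B$, up to the ambiguity captured by $e^{2\pi i \theta}$ and the conjugation by $M$. Once that ingredient is in place, the block-triangular reduction and the standard Perron--Frobenius input for $B_j$ are routine.
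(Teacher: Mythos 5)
Your proof is correct and takes essentially the same route as the paper: decompose $C_j(t)$ into block triangular form so its spectrum is the union of the spectra of the irreducible components, observe that only $B_j(t)$ can contribute near the circle of radius $\lambda$, and then apply Wielandt's theorem (in its periodic form) to $B_j(t)$. The paper states this more tersely, citing Gantmacher for Wielandt's theorem and asserting the component decomposition without the Gelfand and entrywise-dominance preliminaries you spell out, but there is no substantive difference in method.
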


\begin{proof}
When $C_j$ consists of a single component (ignoring the $\ast$ vertex) and so is the same as $B_j$, 
this is Wielandt's Theorem \cite{gant}. When this is not the case, we can write the spectrum of $C_j(t)$ 
as a union of the spectra of the irreducible components making up $C_j(t)$. By definition, each $C_j$ 
has one component $B_j$ with spectral radius $\lambda$ and all other components have spectral radius 
strictly less than $\lambda$. Therefore applying Wielandt's Theorem to each component gives the 
required result.
\end{proof}

We now follow the method presented in \cite{sharp}. Let $f_j = f|_{\Sigma_{B_i}}$ for $j=1,\ldots,m$. 
If a sequence $\gamma = (x_0,x_1,...,x_n)$ is such that $B_j(x_i,x_{i+1}) =1$ for $i=0,\ldots,n$ and 
$x_0 = x_n$, then we call $\gamma$ a cycle and define its length as $l(\gamma)=n$. Let $\mathcal{C}_j$ 
be the collection of all such cycles and note that the length of any cycle in $\mathcal{C}_j$ is a multiple 
of $p_j$. Given a cycle $\gamma \in \mathcal{C}_j$, we define its $f_j$-weight to be 
\[
w_{f_j}(\gamma) = f_j(x_0,x_1) + \cdots + f_j(x_{n-1},x_n).
\]
 Let $\Gamma_{j}$ be the subgroup of $\mathbb{Z}^\nu$ generated by 
 $\{ w_{f_j}(\gamma) : \gamma \in \mathcal{C}_j\}$. We define $\Delta_{j}$ 
 to be the following subgroup of $\Gamma_{f_j}$,
$$
\Delta_{j} = \{w_{f_j}(\gamma) - w_{f_j}(\gamma'): \gamma, \gamma' \in \mathcal{C}_j 
\text{ and } l(\gamma)=l(\gamma')\}.
$$
(This is a version of Krieger's $\Delta$-group \cite{krieger}. For a proof that it is a group,
see page 892 of \cite{sharp2001}.)
We now choose two cycles $\gamma, \gamma' \in \mathcal{C}_j$ such that $l(\gamma)-l(\gamma')=p_j$ 
and set $c_j = w_{f_j}(\gamma) - w_{f_j}(\gamma')$. 
Applying the results of \cite{mar-tun} to the aperiodic shift $(\Sigma_{B_j}, \sigma^{p_j})$, we see that the 
group $\Gamma_{j}/\Delta_{j}$ is cyclic and is generated by the element $c_j + \Delta_j$.
Our aim is to show that this group has finite order.  
To do so, we will use a result of Marcus and Tuncel.
For each $j=1,\dots,m$, let $E_j$ denote the directed edge set for the graph with transition matrix $B_j$. Write $V_j$ for the analogously defined vertex sets.
We say that a function $g: E_j \to \mathbb{R}$ is cohomologous to a constant if there exists $C \in \mathbb{R}$ and $h: V_j \to \mathbb{R}$ such that $g(x,y) = C + h(y)-h(x)$ for all $(x,y) \in E_j$.
\begin{lemma} [\cite{mar-tun}]
If $\langle t,f_j^{p_j}\rangle$ is not cohomologous to a constant for any non-zero $t \in \mathbb{R}^\nu/\mathbb{Z}^\nu$, then $\Gamma_j/\Delta_j$ has finite order.
\end{lemma}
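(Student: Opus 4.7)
The plan is to prove the contrapositive: assuming $\Gamma_j/\Delta_j$ has infinite order, I will construct a non-zero $t \in \mathbb{R}^\nu/\mathbb{Z}^\nu$ for which $\langle t, f_j^{p_j}\rangle$ is cohomologous (mod $\mathbb{Z}$) to a constant. Since $\Gamma_j/\Delta_j$ is cyclic with generator $c_j + \Delta_j$, the assumption forces it to be infinite cyclic, so in particular $c_j$ has infinite order modulo $\Delta_j$.

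The first step is to produce the desired $t$ by a Pontryagin-duality argument. I would pick any non-zero $\alpha \in \mathbb{R}/\mathbb{Z}$; the assignment $n c_j + \delta \mapsto n\alpha$ (with $\delta \in \Delta_j$) is a well-defined homomorphism $\chi : \Gamma_j \to \mathbb{R}/\mathbb{Z}$ by the infinite-cyclic hypothesis. Divisibility of $\mathbb{R}/\mathbb{Z}$ (equivalently, its injectivity in the category of abelian groups) lets me extend $\chi$ to $\widetilde{\chi} : \mathbb{Z}^\nu \to \mathbb{R}/\mathbb{Z}$. Under the canonical identification $\mathrm{Hom}(\mathbb{Z}^\nu, \mathbb{R}/\mathbb{Z}) \cong \mathbb{R}^\nu/\mathbb{Z}^\nu$, $\widetilde{\chi}$ corresponds to pairing against some $t \in \mathbb{R}^\nu/\mathbb{Z}^\nu$, which is non-zero in the torus because $\widetilde{\chi}(c_j) = \alpha \neq 0$.

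Next I would translate the defining property of $t$ into a statement about Birkhoff sums. By construction $\langle t, \Delta_j\rangle \subset \mathbb{Z}$, so for any two cycles $\gamma, \gamma' \in \mathcal{C}_j$ of equal length one has $\langle t, w_{f_j}(\gamma)\rangle \equiv \langle t, w_{f_j}(\gamma')\rangle \pmod{\mathbb{Z}}$; in other words, the quantity $\langle t, w_{f_j}(\gamma)\rangle \bmod 1$ depends only on $l(\gamma)$. Fixing a length-$p_j$ cycle $\gamma_0$ and setting $\beta = \langle t, w_{f_j}(\gamma_0)\rangle \bmod 1$, a concatenation argument (compare an arbitrary length-$np_j$ cycle with the $n$-fold iterate of $\gamma_0$) shows that every length-$np_j$ cycle contributes $n\beta \bmod \mathbb{Z}$.

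The final and most delicate step is to apply the Livshitz / Marcus--Tuncel theorem on the cyclic decomposition $(\Sigma_{B_j}, \sigma^{p_j}) = \bigsqcup_{k=0}^{p_j - 1} (\Sigma_k^j, \sigma^{p_j})$ into aperiodic shifts. Cycles in $\mathcal{C}_j$ of length $np_j$ are precisely the periodic orbits of period $n$ for $\sigma^{p_j}$, and the previous step shows that the Birkhoff sum of $\langle t, f_j^{p_j}\rangle$ over every such orbit equals $n\beta \bmod \mathbb{Z}$. The $\mathbb{R}/\mathbb{Z}$-valued form of Livshitz/Marcus--Tuncel, applied on each aperiodic component, then yields that $\langle t, f_j^{p_j}\rangle$ is cohomologous (mod $\mathbb{Z}$) to the constant $\beta$, contradicting the hypothesis. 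The main obstacle is this last step: one must use the circle-valued rather than real-valued version of the cited result, and carefully assemble the coboundaries produced on the $p_j$ aperiodic pieces $\Sigma_k^j$ into a single cohomology on $\Sigma_{B_j}$, so that the constant and the transfer function are globally consistent; local constancy of $f_j$ ensures the required regularity is not an obstruction.
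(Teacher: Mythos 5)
The paper cites this lemma directly from Marcus--Tuncel without supplying a proof, so there is no in-paper argument to compare against; I will just assess your argument on its own terms.

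Your overall strategy (contrapositive, dualize $\Gamma_j/\Delta_j$, characterize the resulting $t$ via Birkhoff sums over cycles, close with Liv\v{s}ic on the aperiodic pieces of the cyclic decomposition) is the right shape, but there is a genuine gap in the first step that propagates to a wrong conclusion. Your Pontryagin-duality construction produces a character $\widetilde{\chi}\colon \mathbb{Z}^\nu \to \mathbb{R}/\mathbb{Z}$ killing $\Delta_j$ \emph{in} $\mathbb{R}/\mathbb{Z}$; unwinding this, the corresponding $t$ satisfies only $\langle \tilde t, \Delta_j\rangle \subset \mathbb{Z}$ for any real lift $\tilde t$. That gives cohomology to a constant \emph{mod} $\mathbb{Z}$, which you acknowledge. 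But the paper's notion of ``cohomologous to a constant'' is over $\mathbb{R}$ (they explicitly define it with $C\in\mathbb{R}$ and $h\colon V_j\to\mathbb{R}$, and the proof of their Lemma~\ref{notcohom} works with real-valued $\psi_t$ and real Liv\v{s}ic), and the paper only verifies the hypothesis ``not cohomologous over $\mathbb{R}$''. Since ``cohomologous mod $\mathbb{Z}$'' does not imply ``cohomologous over $\mathbb{R}$ for some integer translate of $t$'' (a homomorphism $\Delta_j \to \mathbb{Z}$ need not extend to $\mathbb{Z}^\nu \to \mathbb{Z}$), you end up proving a strictly weaker lemma whose hypothesis the paper does not check. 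Concretely, the mod-$\mathbb{Z}$ version of the lemma is even false: take $\nu=1$, $\Gamma_j=\mathbb{Z}$, $\Delta_j=2\mathbb{Z}$ --- then $\Gamma_j/\Delta_j$ is finite, yet $t=1/2$ is non-zero in $\mathbb{R}/\mathbb{Z}$ with $\langle t, \Delta_j\rangle\subset\mathbb{Z}$, so a mod-$\mathbb{Z}$ cohomology exists.

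The repair is short and avoids the extension argument entirely. Since $\Gamma_j/\Delta_j$ is cyclic and assumed infinite, $\mathrm{rank}_{\mathbb{Z}}(\Delta_j) = \mathrm{rank}_{\mathbb{Z}}(\Gamma_j) - 1 \le \nu-1$, so the real annihilator $\Delta_j^\perp := \{ s \in \mathbb{R}^\nu : \langle s, \Delta_j\rangle = 0\}$ is a subspace of dimension $\ge 1$ and hence contains a vector $\tilde t \notin \mathbb{Z}^\nu$. For this $\tilde t$ the Birkhoff sums of $\langle \tilde t, f_j^{p_j}\rangle$ over all $\sigma^{p_j}$-periodic orbits of a given period agree \emph{exactly} (not merely mod $\mathbb{Z}$), equal to $n\langle\tilde t, w_{f_j}(\gamma_0)\rangle$ for a reference $p_j$-cycle $\gamma_0$; real-valued Liv\v{s}ic on each aperiodic component $\Sigma_k^j$ then gives cohomology over $\mathbb{R}$ to a constant, and the constants and transfer functions on the $p_j$ pieces assemble as you describe. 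The image of $\tilde t$ in $\mathbb{R}^\nu/\mathbb{Z}^\nu$ is the required non-zero $t$. This yields the contrapositive of the lemma in exactly the form the paper needs.
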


It is clear that, for $t \in \mathbb{R}^\nu$, $\langle t,f_j^{p_j} \rangle$ is cohomologous to a constant if and only if $\langle t, f_j \rangle$ is cohomologous to  constant. Using ideas from \cite{stats}, we will show that the hypothesis of the above lemma is satisfied for each $j=1,\ldots,m$.

\begin{lemma} \label{notcohom}
For non-zero $t \in \mathbb{R}^\nu/\mathbb{Z}^\nu$ and for all $j=1,\ldots,m$, $\langle t,f_j\rangle$ is not cohomologous to a constant.
\end{lemma}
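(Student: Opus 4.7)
The plan is to argue by contradiction: suppose that for some $j$ and some non-zero $t \in \mathbb{R}^\nu/\mathbb{Z}^\nu$, we have $\langle t, f_j\rangle = C + h\circ\sigma - h$ on $E_j$ for some $C \in \mathbb{R}$ and $h:V_j\to\mathbb{R}$. Summing along any cycle $\gamma\in\mathcal{C}_j$ collapses the coboundary and yields $\langle t, w_{f_j}(\gamma)\rangle = C\,l(\gamma)$. For pairs of cycles of equal length this forces $\langle t,\Delta_j\rangle = 0$, so $t$ lies in the real orthogonal complement $\Delta_j^\perp \subset \mathbb{R}^\nu$. Since $\Delta_j^\perp$ is a linear subspace and $\mathbb{Z}^\nu$ is countable, a positive-dimensional $\Delta_j^\perp$ must contain non-integer points, hence a non-zero class modulo $\mathbb{Z}^\nu$. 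So the lemma reduces to showing that, for each $j$, the group $\Delta_j \subseteq \mathbb{Z}^\nu$ has full rank $\nu$, i.e.\ that $\Delta_j^\perp = \{0\}$.

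To study $\Delta_j$ in group-theoretic terms, fix a vertex $v\in V_j$ and a path in $\mathcal{G}$ from $\ast$ to $v$ with group label $g_0 \in G$. For any pair of cycles $\gamma_1,\gamma_2$ at $v$ with $l(\gamma_1)=l(\gamma_2)$, the group elements $g_0 h_{\gamma_1}$ and $g_0 h_{\gamma_2}$ have equal word length (by Property (3) of Definition \ref{lab}) and satisfy $\varphi(g_0 h_{\gamma_1}) - \varphi(g_0 h_{\gamma_2}) = w_{f_j}(\gamma_1) - w_{f_j}(\gamma_2) \in \Delta_j$. More generally, for cycles of lengths $n_1,n_2$ the iterates $\gamma_1^{n_2}, \gamma_2^{n_1}$ have common length $n_1 n_2$ and place $n_2\varphi(h_{\gamma_1}) - n_1\varphi(h_{\gamma_2})$ into $\Delta_j$. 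Thus, to exhibit $\nu$ linearly independent elements of $\Delta_j$, it suffices to produce $\nu$ linearly independent $\varphi$-differences of pairs of equal-length group elements whose Cayley-graph paths both pass through $B_j$.

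The main obstacle is carrying out this construction, and here ideas from \cite{stats} are essential. Pick $s_1,\ldots,s_\nu \in G$ whose $\varphi$-images form a basis of $\mathbb{Z}^\nu$. Using the mixing of the aperiodic subshifts $(\Sigma_k^j,\sigma^{p_j})$ together with the existence of two independent loxodromic elements of $G$ (guaranteed by non-elementarity), one can arrange, for each $s_i$, pairs of group elements $g',g''$ of equal word length with Cayley-graph paths both through $B_j$ and $\varphi(g') - \varphi(g'') = \varphi(s_i)$; concretely, this involves inserting and deleting short balancing loops inside $B_j$ so that prepending $s_i$ to a sufficiently long path through $B_j$ produces another path of the same length still lying in $B_j$. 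The resulting $\nu$ differences are linearly independent in $\mathbb{Z}^\nu$ and therefore span a rank-$\nu$ subgroup of $\Delta_j$.

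Once $\Delta_j$ has rank $\nu$ for each $j$, its real orthogonal complement is trivial, which contradicts the existence of a non-zero class in $\Delta_j^\perp \cap (\mathbb{R}^\nu/\mathbb{Z}^\nu)$. This completes the proof.
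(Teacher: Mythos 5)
Your reduction is correct but is in fact an equivalence: $\langle t, f_j\rangle$ is cohomologous to a real constant if and only if $t$ lies in the real annihilator $\Delta_j^\perp$ (a Liv\v sic-type argument gives the converse, since $\langle t, w_{f_j}(\gamma)\rangle/l(\gamma)$ is then independent of $\gamma$). So the lemma \emph{is} the statement that $\Delta_j$ has full rank $\nu$ in $\mathbb{Z}^\nu$; you have restated it rather than reduced it, and the whole burden now rests on your third paragraph, which is where the proposal breaks down. The construction sketched there is not well-defined: one cannot ``prepend $s_i$ to a sufficiently long path through $B_j$,'' since $s_i$ is a group element and nothing in the strongly Markov structure guarantees that the resulting word corresponds to an admissible path in $\mathcal{G}$, let alone a path staying inside $B_j$, let alone a \emph{cycle}. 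Moreover, $\Delta_j$ is generated by differences of weights of equal-length cycles in $\mathcal{C}_j$ based at a common vertex of $B_j$; differences $\varphi(g')-\varphi(g'')$ of arbitrary equal-word-length group elements whose Markov paths merely pass through $B_j$ need not lie in $\Delta_j$. No amount of ``mixing'' or appeal to loxodromic elements is by itself enough to close this gap.

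The paper's proof is of an entirely different character and relies on a substantial external input that your sketch never invokes: the Gou\"ezel--Math\`eus--Maucourant zero-density theorem (\ref{gmm}). The argument first uses Theorem 1.1 and Proposition 7.2 of \cite{stats} together with the symmetry of the generating set to show that if $\langle t,f_j\rangle$ were cohomologous to a constant, that constant would be $0$; then Liv\v sic's criterion turns ``cohomologous to $0$'' into $\langle t, w_{f_j}(\gamma)\rangle=0$ for every $\gamma\in\mathcal{C}_j$, so that each associated group element $g_\gamma$ lies in $\ker\psi_t$. Since the number of such cycles grows at the full rate $\lambda^{np_j}$, this would force the infinite-index subgroup $\ker\psi_t$ to have positive upper density in $W_n$, contradicting (\ref{gmm}). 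It is precisely this zero-density result that substitutes for the explicit combinatorial construction you attempt; absent some comparable global control on how Markov cycles distribute over subgroups, there is no evident way to manufacture $\nu$ independent equal-length cycle-weight differences directly.
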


\begin{proof}
We begin by noting that, since $\varphi$ is surjective, for any $t\in\mathbb{R}^\nu\backslash\{0\}$ the function $\psi_t : =\langle t,\varphi \rangle : G \to \mathbb{R}$ is a non-trivial group homomorphism. Theorem $1.1$ and Proposition $7.2$ of \cite{stats} imply that if $\langle t, f_j \rangle$ (for any $j\in\{1,\ldots,m\}$) is cohomologous to a constant, then that constant is given by
$$\lim_{n\to\infty} \frac{1}{\#W_n} \sum_{|g|=n} \frac{\psi_t(g)}{n}.$$
Since our generating set $S$ is symmetric, $|g|=|g^{-1}|$ for all $g\in G$ and so the above limit is $0$ by symmetry. Hence we need to show that $\langle t, f_j \rangle$ is not cohomologous to $0$. By Livsic's criterion \cite{PP}, $\langle t, f_j \rangle$ is cohomologous to $0$ if and only if $\langle t, w_{f_j}(\gamma) \rangle = 0$ for all loops $\gamma \in C_j$.\\
\indent Suppose for contradiction that $\langle t, w_{f_j}(\gamma) \rangle = 0$ for all loops $\gamma \in C_j$. Now, for $\gamma=(x_0,\ldots,x_n) \in C_j$, $g_\gamma= \rho(x_0,x_1)\rho(x_1,x_2) \ldots \rho(x_{n-1},x_n)$ belongs to the kernel of $\psi_t$.  Furthermore, $g_\gamma$ has word length $n$. Also, Property $(2)$ from Definition $2.1$ implies that for any two distinct  loops $\gamma, \gamma' \in \mathcal{C}_j$, we have $g_\gamma \ne g_{\gamma'}$ whenever $\gamma$ and $\gamma'$ have the same initial vertex. Since the number of loops of length $np_j$ in $C_j$ is growing like $\lambda^{np_j}$, this implies that there exists $C>0$ such that
$$ \#(W_{np_j} \cap \text{ker}(\psi_t)) \ge C\lambda^{np_j}$$
for $n\ge1$ and hence that
$$\limsup_{n\to\infty} \frac{\#(W_n \cap \text{ker}(\psi_t))}{\#W_n} > 0.$$
Since $\text{ker}(\psi_t)$ is an infinite index subgroup of $G$, this contradicts the result of Gou\"ezel, Matheus and Maucourant \cite{gmm} written above as (\ref{gmm}).
\end{proof}

\begin{remark}
Since the above proof relies on the zero density result of Gou\"ezel, Matheus and Maucourant \cite{gmm}, quantifying the decay rate in (1.1) requires a priori knowledge of the convergence to zero.
\end{remark}

Let $D_j = \left|\Gamma_j/\Delta_j\right|$ for $j=1,\ldots,m$. From the above discussion, 
we know that each $D_j$ is finite. We also note that Lemma \ref{notcohom} shows that 
$\text{rank}_{\mathbb{Z}} ( \Gamma_j) = \nu$  and so $|\mathbb{Z}^\nu/\Gamma_j|$ is finite for each 
$j=1,\ldots,m$. Combining this with all of the above work, allows us to state
the following result that describes the spectral behaviour of the $C_j(t)$ as $t$ varies. 
We use the notation $\varrho(M)$ to denote the spectral radius of a matrix $M$.

\begin{proposition} \label{maxeig}
For $t \in \mathbb{R}^\nu/\mathbb{Z}^\nu$, define $\chi_t \in \widehat{\mathbb{Z}^\nu}$ by $\chi_t(x) = e^{2\pi i \langle t, x \rangle}$. Then we have that
$$\{ \chi_t: \varrho(C_j(t)) = \lambda\} = \Delta_{f_j}^\perp,$$ 
where $\Delta_{f_j}^\perp = \{ \chi \in \widehat{\mathbb{Z}^\nu} : \chi(\Delta_{f_j}) =1\}$. 
Furthermore, when $\chi_t \in \Delta_{f_j}^\perp$, $C_j(t)$ has $p_j$ simple maximal eigenvalues 
of the form $e^{2\pi i \theta} e^{2\pi i k/p_j}\lambda$ for some $\theta \in \mathbb{R}$ and 
$k=0,\ldots ,p_j-1$.
\end{proposition}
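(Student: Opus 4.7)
The plan is to combine the preceding lemma, which identifies $\varrho(C_j(t)) = \lambda$ with the existence of a conjugation $B_j(t) = e^{2\pi i \theta} M B_j M^{-1}$ for some $\theta \in \mathbb{R}$ and diagonal matrix $M$ with unimodular entries, with a direct entrywise translation of this conjugation into a statement about closed paths in $B_j$. The last assertion of the proposition, regarding the $p_j$ simple maximal eigenvalues of $C_j(t)$, is already contained in the preceding lemma, so the remaining task is the set equality. Reading the conjugation coordinate by coordinate, with $M(v,v) = e^{2\pi i h(v)}$, it is equivalent to the existence of $\theta \in \mathbb{R}/\mathbb{Z}$ and $h : V_j \to \mathbb{R}/\mathbb{Z}$ with
\[
\langle t, f_j(u,v)\rangle \equiv \theta + h(u) - h(v) \pmod{\mathbb{Z}} \qquad \text{for every } (u,v) \in E_j,
\]
i.e., $\langle t, f_j\rangle$ cohomologous modulo $\mathbb{Z}$ to the constant $\theta$.

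The inclusion $\{\chi_t : \varrho(C_j(t)) = \lambda\} \subseteq \Delta_{f_j}^\perp$ is immediate from this reformulation: summing the coboundary identity around any cycle $\gamma \in \mathcal{C}_j$ of length $n$ gives $\langle t, w_{f_j}(\gamma)\rangle \equiv n\theta \pmod{\mathbb{Z}}$, and for two cycles $\gamma, \gamma'$ of equal length this yields $\langle t, w_{f_j}(\gamma) - w_{f_j}(\gamma')\rangle \in \mathbb{Z}$, so $\chi_t$ annihilates $\Delta_{f_j}$.

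For the reverse inclusion, suppose $\chi_t \in \Delta_{f_j}^\perp$. Then the quantity $\Phi(n) := \langle t, w_{f_j}(\gamma)\rangle \bmod \mathbb{Z}$ depends only on $n = l(\gamma)$. By irreducibility of $B_j$ together with aperiodicity of $B_j^{p_j}$ on each cyclic class, at any fixed vertex there exist cycles of every sufficiently large length in $p_j \mathbb{Z}$, so concatenating cycles at a common vertex shows $\Phi(m+n) \equiv \Phi(m) + \Phi(n) \pmod{\mathbb{Z}}$, and $\Phi$ extends uniquely to a homomorphism $p_j \mathbb{Z} \to \mathbb{R}/\mathbb{Z}$. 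Choose $\theta \in \mathbb{R}/\mathbb{Z}$ with $\Phi(n) \equiv n\theta \pmod{\mathbb{Z}}$ on this domain. Fix a base vertex $v_0 \in V_j$ and, for each $v \in V_j$, a path $\pi_v$ from $v_0$ to $v$ of length $n_v$ (using irreducibility), and set $h(v) := n_v\theta - \langle t, w_{f_j}(\pi_v)\rangle \pmod{\mathbb{Z}}$. Any two such paths can be closed by a common return path to $v_0$, producing cycles through $v_0$ whose $\Phi$-values agree; this forces $h(v)$ to be independent of the chosen $\pi_v$. Comparing $\pi_u$ extended by an edge $(u,v) \in E_j$ with $\pi_v$ then produces the displayed coboundary identity on every edge, yielding the required $M$ and $\theta$ and hence $\varrho(C_j(t)) = \lambda$.

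The principal technical point will be the well-definedness of the extension of $\Phi$ to a genuine homomorphism and of the function $h$. Both depend on having enough cycles of each admissible length through every vertex to carry out all the required cycle comparisons, which is exactly what the aperiodicity of $B_j^{p_j}$ on each cyclic component of $\Sigma_{B_j}$ provides.
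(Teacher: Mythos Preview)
Your argument is correct and self-contained, but it takes a genuinely different route from the paper. The paper does not re-derive the characterisation from scratch: it simply invokes Proposition~4 of \cite{sharp} (itself drawn from \cite{ps1994}), applied to the aperiodic maximal block of $C_j^{p_j}(t)$, and then notes that the part of the spectrum of $C_j(t)$ contributed by $B_j(t)$ is invariant under the rotation $z \mapsto e^{2\pi i/p_j}z$ to recover the full set of $p_j$ simple eigenvalues. So the paper's proof is a reduction to a cited aperiodic result, whereas you unpack the Wielandt conjugation into an explicit $\mathbb{R}/\mathbb{Z}$-valued coboundary equation and solve it directly by building $\theta$ and $h$ from paths based at a fixed vertex. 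Your approach is more elementary and does not send the reader to the earlier papers; the paper's approach is much shorter once those references are granted.

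One small phrasing slip in your well-definedness step for $h$: the two cycles obtained by closing $\pi_v$ and $\pi_v'$ with a common return path need not have the same length, so their $\Phi$-values need not literally ``agree''. What you actually use is that each cycle has $\Phi$-value equal to its length times $\theta$; subtracting the two relations then yields
\[
n_v\theta - \langle t, w_{f_j}(\pi_v)\rangle \;\equiv\; n_v'\theta - \langle t, w_{f_j}(\pi_v')\rangle \pmod{\mathbb{Z}},
\]
which is exactly the independence of $h(v)$ from the chosen path. With that correction the argument goes through.
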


\begin{proof}
This is essentially Proposition $4$ from \cite{sharp} which is derived from work in \cite{ps1994}. 
However, here we need to consider the non-aperiodic matrices $C_j(t)$. 
To deduce this more general statement, we can apply Proposition $4$ from \cite{sharp} to the 
maximal component associated to the matrix $C_j^{p_j}(t)$. This is justified since this maximal 
component is aperiodic. To conclude the proof, we note that the part of the spectrum of $C_j(t)$ 
coming from $B_j(t)$ is invariant under the rotation $z \mapsto z e^{2\pi i/p_j}$.
\end{proof}

Proposition \ref{maxeig} implies that there exist $D_j < \infty$ values of $t$ for which the spectral 
radius of $C_j(t)$ is maximal and equal to $\lambda$. Denote these values by
$t=0,t_1^j\ldots ,t^j_{D_j-1}$. When $t$ takes one of these values, $C_j(t)$ has $p_j$ simple 
maximal eigenvalues of the form $e^{2\pi i \theta} e^{2\pi i k/p_j} \lambda$  for $k=0,\ldots ,p_j-1$ 
and for some 
$\theta \in \mathbb{R}$. 
We now choose, for each $j=1,\ldots ,m$, a neighbourhood $U_0^j$ of zero and define 
$U_r^j = U_0^j + t_k^j$ for $k=0,\ldots ,D_j-1$. 
Results from perturbation theory guarantee that, as long as each $U_0^j$ is sufficiently small, 
there exists $\epsilon >0$ such that the following hold for each $j=1,\ldots,m$.
\begin{enumerate}
 \item If $t \in \bigcup_{r=0}^{D_j-1}U_r^j$, then the matrices $C_j(t)$ each have $p_j$ simple, maximal  eigenvalues of the form $\lambda_j(t) e^{2\pi i k/p_j}$  for $k=0,\ldots,p_j-1$, where $t \to \lambda_j(t)$ is analytic and independent of $k=0,\ldots ,p_j-1$.
 \item  Let $M_\nu(\mathbb{C})$ denote the vector space of $\nu \times\nu$ complex matrices. For each $j=1,\dots,m$ and $k=0,\dots,p_j-1$, there exists an analytic matrix-valued function $Q_{j,k} : \bigcup_{r=0}^{D_j-1}U_r^j \to M_\nu(\mathbb{C})$,
 where $Q_{j,k}(t)$ is the eigenprojection onto the eigenspace 
 associated to the eigenvalue $\lambda_j(t)e^{2\pi i k/p_j}$ of the matrix $C_j(t)$.
 \item If $t \in (\mathbb{R}^\nu/\mathbb{Z}^\nu) \setminus  \bigcup_{r=0}^{Dj-1}U_r^j$
 then the spectral radius of each $C_j(t)$ is bounded uniformly above by $\lambda - \epsilon$.
\end{enumerate}

Using this description of the spectrum, we can write
$$
\#(W_n \cap N) = \sum_{j=1}^m \sum_{r=0}^{D_j -1} \sum_{k=0}^{p_j-1} \int_{U_r^j} \lambda_j(t) 
e^{2\pi i kn/p_j} \langle Q_{j,k}(t) v_\ast, \textbf{1} \rangle  \, dt + O((\lambda-\epsilon)^n),
$$ 
for some $\epsilon>0$. 
Hence there exists constants 
$c_{r,k}^j = \langle Q_{j,k}(t_r^j)v_\ast,\textbf{1}  \rangle$, for $r=0,\ldots ,D_j-1$ and $k=0,\ldots, p_j-1$, 
such that $\#(W_n \cap N)$ is equal to
\begin{equation}\label{bn}
\sum_{j=1}^m \left(\sum_{r=0}^{D_j-1} \sum_{k=0}^{p_j-1} e^{2\pi i n(r/D_j + k/p_j)} c_{r,k}^j  \right) \int_{U_0^j} \lambda_j(t)^n \left(1+O(\|t\|)\right) \, dt + O((\lambda-\epsilon)^n).
\end{equation}
The asymptotics of each
$$a_n^j := \int_{U_0^j} \lambda_j(t)^n \left(1+O(\|t\|)\right) \, dt$$
were studied in 
 \cite{ps1994}, where it was shown that, for each $j =1,\ldots,m$, there exists $\tau_j>0$ such that
\begin{equation} \label{asy}
 a_n^j \sim \frac{\tau_j \lambda^n}{n^{\nu/2}}
\end{equation}
as $n\to\infty$. 
Applying this along the subsequence $Dn$, where $D$ is given by the product of all the $p_1,\ldots, p_m$ and $D_1,\ldots, D_m$, we see that
\begin{equation} \label{bdn}
\#(W_{Dn} \cap N) = \frac{\widetilde{C}\lambda^{Dn}}{(Dn)^{\nu/2}} + o\left(\frac{\lambda^{Dn}}{(Dn)^{\nu/2}}\right)
\end{equation}
as $n\to\infty$, where
$$
\widetilde{C} =\sum_{j=1}^m \tau_j\left(\sum_{r=0}^{D_j-1} \sum_{k=0}^{p_j-1}c_{r,k}^j \right).
$$

It is clear that $\widetilde{C} \in \mathbb{R}_{\ge0}$. However, for $(\ref{bdn})$ to be a useful asymptotic expression, we would like that $\widetilde{C}$ is strictly positive. 
We now show that this is always the case.
\begin{lemma}
We necessarily have that $\widetilde{C}>0$.
\end{lemma}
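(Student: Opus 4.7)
The plan is to establish a matching lower bound $\#(W_{Dn}\cap N) \geq c\,\lambda^{Dn}/(Dn)^{\nu/2}$ for some $c>0$ along a subsequence; together with the asymptotic $(\ref{bdn})$ and the trivial non-negativity $\widetilde C\geq 0$, this forces $\widetilde C>0$.

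By Coornaert's estimate $\#W_n\geq C_1\lambda^n$ together with the fact that paths in $\mathcal G$ avoiding every maximal component contribute only $O((\lambda-\epsilon)^n)$, at least one maximal component $B_{j_0}$ must be reachable from the initial vertex $\ast$. Fix such a $j_0$ and a directed path $P$ in $\mathcal G$ from $\ast$ to a vertex $x_0\in B_{j_0}$; write $\ell$ for its length and $v_0\in\mathbb Z^\nu$ for the $\varphi$-image of the corresponding group element. The subclass $\mathcal S_n\subset W_{Dn}\cap N$ consisting of elements whose canonical path starts with $P$ and thereafter remains within $B_{j_0}$ is, by Property~$(2)$ of Definition~\ref{lab}, in bijection with the length-$(Dn-\ell)$ paths in $\Sigma_{B_{j_0}}$ starting at $x_0$ whose $f_{j_0}$-weighted sum is $-v_0$.

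The number of such paths can be estimated via a local limit theorem for the $\mathbb Z^\nu$-valued H\"older observable $f_{j_0}$ over the mixing subshift $(\Sigma_{B_{j_0}},\sigma^{p_{j_0}})$; the required non-degeneracy, namely that $\langle t,f_{j_0}\rangle$ is not cohomologous to a constant for any non-zero $t\in\mathbb R^\nu/\mathbb Z^\nu$, is precisely Lemma~\ref{notcohom}. Such an LLT, proved using the spectral techniques of \cite{ps1994}, produces an asymptotic of the form $\#\mathcal S_n\sim c_0\lambda^{Dn-\ell}/(Dn-\ell)^{\nu/2}$ with $c_0>0$, positivity coming from the Perron--Frobenius eigenvectors of the aperiodic cyclic pieces of $B_{j_0}$. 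Since $\#(W_{Dn}\cap N)\geq \#\mathcal S_n$, this delivers the desired lower bound.

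The main technical obstacle is the arithmetic bookkeeping: the LLT yields a strictly positive count only when $Dn-\ell$ lies in the residue class modulo $p_{j_0}$ admitting paths of that length from $x_0$, and $-v_0$ belongs to the relevant coset of $\Gamma_{j_0}$ in $\mathbb Z^\nu$. Both constraints are handled by modifying $P$ via concatenation with a suitable loop in $B_{j_0}$ based at $x_0$, using the irreducibility of $B_{j_0}$ and the full rank of $\Gamma_{j_0}$ in $\mathbb Z^\nu$ (which is a consequence of Lemma~\ref{notcohom}, as noted just before Proposition~\ref{maxeig}).
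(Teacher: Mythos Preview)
Your overall strategy---produce a lower bound of order $\lambda^{Dn}/(Dn)^{\nu/2}$ for $\#(W_{Dn}\cap N)$ by counting a concrete subclass---is the same as the paper's. The execution differs. The paper does not use open paths with a fixed prefix $P$; instead it counts \emph{closed loops} $\gamma$ in a maximal component $B_j$ of length $Dn$ with $w_{f_j}(\gamma)=0$. Each such loop produces an element $g_\gamma=\rho(x_0,x_1)\cdots\rho(x_{Dn-1},x_{Dn})\in W_{Dn}\cap N$, and distinct loops with the same base vertex give distinct group elements, so the map is at most $(\#V_j)$-to-one. The count of such loops is exactly what \cite{ps1994} provides, with a strictly positive constant, because the target value $0$ lies in the subgroup $\Delta_j$ automatically. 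This avoids the prefix $P$, the open-path local limit theorem, and all of the coset bookkeeping you flag.

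Your bookkeeping step, as written, has a gap. Concatenating $P$ with a loop based at $x_0$ changes $v_0$ only by an element of $\Gamma_{j_0}$ (loop weights generate $\Gamma_{j_0}$) and changes $\ell$ only by a multiple of $p_{j_0}$. Thus it moves neither the $\Gamma_{j_0}$-coset of $v_0$ in $\mathbb Z^\nu$ nor the residue of $Dn-\ell$ modulo $p_{j_0}$. Since the prefix $P$ passes through vertices outside $B_{j_0}$, there is no reason for $v_0$ to lie in $\Gamma_{j_0}$ to begin with, so this adjustment alone need not place $-v_0$ in a coset of $\Delta_{j_0}$ for which the path LLT yields a positive constant. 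The issue is repairable---for example by also extending $P$ along non-closed paths inside $B_{j_0}$, which varies both the terminal vertex $x_0$ and the residue class of $\ell$---but ``concatenation with a suitable loop'' together with the full rank of $\Gamma_{j_0}$ is not by itself enough. The paper's use of closed loops sidesteps the difficulty entirely, since the target weight $0$ is trivially in the relevant subgroup.
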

\begin{proof}
Fix $j \in \{1,\ldots, m\}$ and recall that 
for any loop $\gamma =(x_0,\ldots,x_{Dn})  \in \mathcal{C}_j$ with $w_{f_j}(\gamma)=0$, the 
group element $g_\gamma = \rho(x_0,x_1)\rho(x_1,x_2) \ldots \rho(x_{Dn-1},x_{Dn})$ belongs 
to the kernel of $\varphi$ (or, equivalently, to $N$) and furthermore, $g_\gamma$ has word length $Dn$. Also, for any two distinct  loops 
$\gamma, \gamma' \in \mathcal{C}_j$, we have $g_\gamma \ne g_{\gamma'}$ whenever
$\gamma$ and $\gamma'$ have the same initial vertex. Combining these observations 
and applying the pigeonhole principle gives that
$$ \#(W_{Dn} \cap N ) \ge (\#V_j)^{-1} \#\{ \gamma \in \mathcal{C}_j : l(\gamma)=Dn, \, w_{f_j}(\gamma) =0 \}$$
for all $n \ge 1$. 
Pollicott and Sharp proved in \cite{ps1994} that
$$\#\{ \gamma \in \mathcal{C}_j : l(\gamma)=Dn, \, w_{f_j}(\gamma) =0 \} 
\sim \frac{K \lambda^{Dn}}{(Dn)^{\nu/2}}$$
as $n\to\infty$ for some $K >0$. Hence 
$$
\widetilde{C} = \limsup_{n\to\infty} \frac{(Dn)^{\nu/2} \#(W_{Dn} \cap N)}{ \lambda^{Dn}} \ge K(\#V_j)^{-1} >0,
$$
as required.
\end{proof}
We can now conclude the proof of our main result.
\begin{proof}[Proof of Theorem $1.1$]
Combining (\ref{bn}) and (\ref{asy}) implies that
$$\#(W_n \cap N) = O\left(\sum_{j=1}^m \int_{U_0^j} \lambda_j(t)^n \left(1+O(\|t\|)\right) \, dt \right) = O\left(\frac{\lambda^n}{n^{\nu/2}}\right)$$
which proves the first part of Theorem \ref{thm}. The second part follows from (\ref{bdn}) and the fact that $\widetilde{C} >0$.
\end{proof}

\end{document}